\def\({\left(}
\def\){\right)}
\def\Nx{\nabla}
\def\eb{\varepsilon}
\def\al{\alpha}
\def\rw{\rightarrow}
\def\Om{\Omega}
\def\la{\lambda}
\def\La{\Lambda}
\def\R {\mathbb{R}}
\def\V {{\mathcal V}}
\def\V {{\mathcal V}}
\def\F {{\mathcal F}}
\def \p {\partial}
\def\pt{\partial_t}
\def \and{\qquad\text{and}\qquad}
\def\Nx{\nabla}
\def \p {\partial}
\def\Dx{\Delta}
\newcommand{\be}{\begin{equation} }
\newcommand{\ee}{\end{equation} }
\newtheorem{proposition}{Proposition}[section]
\newtheorem{theorem}[proposition]{Theorem}
\newtheorem{lemma}[proposition]{Lemma}
\theoremstyle{definition}
\newtheorem{remark}[proposition]{Remark}
\numberwithin{equation}{section}
\def \no#1#2#3 {{\bf #1} (#3), #2.}
\def \eds#1#2#3 {#1, #2, #3.}
\title[Feedback Stabilization of Damped Nonlinear Wave  Equations]
{Global Stabilization of the Navier-Stokes-Voight and  the damped nonlinear wave equations by  finite number of  feedback controllers}
\author[]
{Varga K. Kalantarov and Edriss S. Titi}
\address{(V.K.Kalantarov) Department of mathematics, Ko{\c c} University,
\newline\indent Rumelifeneri Yolu, Sariyer 34450\newline\indent
Sariyer, Istanbul, Turkey
\newline\indent Institute of Matematics and Mechanics,
\newline\indent National  Academy of Sciences of Azerbaijan
\newline\indent Baku, Azerbaijan }
\email{vkalantarov@ku.edu.tr}
\address{(E.S.Titi) Department of Mathematics, Texas A\&M University, 3368 TAMU,
\newline\indent College Station, TX 77843-3368, USA.  {\bf ALSO},
\newline\indent Department of Computer Science and Applied Mathematics, Weizmann
\newline\indent Institute of Science, Rehovot 76100, Israel.}
\email{titi@math.tamu.edu}
  \email{edriss.titi@weizmann.ac.il}
 \keywords{damped wave equation, Navier - Stokes equations,strongly damped wave
equation, feedback control, stabilization, finite number feedback controllers. }
\begin{document}

\begin{abstract}{In this paper we introduce  a finite-parameters feedback control algorithm for stabilizing solutions of the  Navier-Stokes-Voigt equations, the
strongly damped nonlinear wave equations and the nonlinear wave equation with nonlinear damping term,  the  Benjamin-Bona-Mahony-Burgers equation and  the KdV-Burgers equation. This algorithm capitalizes on the fact that such infinite-dimensional dissipative dynamical systems posses finite-dimensional long-time behavior which is represented by, for instance, the  finitely many determining parameters of their long-time dynamics, such as determining Fourier modes, determining volume elements, determining nodes , etc..The algorithm utilizes these finite parameters in the form of feedback control to stabilize the relevant solutions. For the sake of clarity, and in order to fix ideas, we focus in this work on the case of low Fourier modes feedback controller, however, our results and tools are equally valid for using other feedback controllers employing other spatial coarse mesh  interpolants.  }
\end{abstract}
\subjclass{35B40, 35B41, 35Q35}
\date{21 May 2017}
\maketitle

\vspace{0.2cm}
\begin{center} {\it  This work is dedicated  to the memory of Professor Igor Chueshov.}
\end{center}
\vspace{0.2cm}
\section{Introduction}\label{s0}

The stabilization problem of  nonlinear parabolic equations, such as the Navier-Stokes equations and other related equations of hydrodynamics, the linear and nonlinear wave equations have been intensively investigated  by various authors (see, e.g., \cite{BaSl}, \cite{Bar1},\cite{BaCo}, \cite{CaKeTi}, \cite{ChLa}, \cite{CoTr}, \cite{FuKa}, \cite{Har},\cite{LaTr}, \cite{Mar}, \cite{SmCeKr}, \cite{Z1} and references therein).
Some of these works have been specifically devoted to the problem of feedback stabilization by controllers depending only on finitely many parameters for nonlinear parabolic equations and related systems, such as the reaction-diffusion equations, the Navier-Stokes equations, the Ozeen equations, the phase-field equations, the Kuramoto-Sivashynsky equations, etc... (see, e.g., \cite{AzTi},\cite{Ba},\cite{Bar2}-\cite{BaWa}, \cite{CaKeTi} \cite{Che},\cite{KaTi},\cite{LuTi}, \cite{Mun}, \cite{RoTe} and references therein). This large body of work relies, whether explicitly or implicity, on the fact that the asymptotic in time behavior  of such infinite-dimensional dissipative dynamical systems is governed by finitely many degrees of freedom. This fact was first established for the two-dimensional Navier--Stokes equations in the pioneer works  of C.\ Foias and G.\ Prodi \cite{FoPr},
and of O.A.\ Ladyzhenskaya \cite{La1}. Specifically, the authors proved  that the  long-time behavior of solutions of 2D Navier-Stokes equations and the trajectories in the global attractor of are determined by the dynamics of  finitely many, but large enough number, of  Fourier modes. This seminal work   triggered a subsequent investigation concerning the finite-dimensional asymptotic in time behavior of solutions of disspative nonlinear PDE\rq{}s (see, e.g.,  \cite{BaVi},
\cite{FMRT}-\cite{FoTe}, \cite{Ha},\cite{JT93}, \cite{La2},\cite{Te} and references therein). Indeed, it was shown, for instance,  that the long-time behavior of solutions of 2D Navier-Stokes equations, reaction-diffusion equations, complex Ginzburg-Landau equations, Kuramoto-Sivashinsky equation, 1D damped semilinear wave equations and a number of other disspative eqations can be determined by finitely many nodes and volume elements, etc... (see, e.g., \cite{CJT1}, \cite{CJT2}, \cite{FoTi}, \cite{FMRT}, \cite{HaRa}, \cite{JT93},  \cite{KaTi1} and references therein).   The general concept of determining interpolant operators (determining functionals) was introduced in \cite{CJT1},\cite{CJT2} and which enabled the authors to present a unified approach for investigating these determining parameters. A further extension of the applicability of this unified approach for studying  the  long-time behavior of various  nonlinear dissipative PDE\rq{}s was developed in, e.g., \cite{CJT2} , \cite{Ch} - \cite{ChLa} and refrences therein.\\

In this paper we study the problem of global stabilization of the solutions of the initial boundary value problems for the 3D
Navier-Stokes-Voigt (NSV) equations
\be\label{0nsv}
\p_t v-\nu \Dx v-\al ^2\Dx \p_tv+(v\cdot \Nx)v+\Nx p=h(x), \ \Nx\cdot v=0 , \ x \in \Om, t
 \in \R^+,
\ee
and the following damped nonlinear dispersive equations:\\
the strongly damped nonlinear wave equation
\be\label{0str}
 \pt^2u-\Dx u -b\Dx \pt u-\la u+f(u)=h(x), \ x \in \Om, t>0,
 \ee
the nonlinear wave equation with a nonlinear damping term
\be\label{0nd}
\p_t^2u+g(\p_t u)-\Dx u +f(u)=h(x), \ x \in \Om, t>0,
\ee
the Benjamin-Bona-Mahoni-Burgers (BBMB) equation,
\be\label{0bbmb}
\p_t v-\p_x^2\p_tv+f(v) \p_xv -\p_x^2v=h(x), \ \ x\in (0,1), t>0
\ee
under the Dirichlet boundary condition
and the Korteweg-de Vries-Burgers (KdVB) equation
\be\label{0kdv}
\p_t v+\p_x^3v+v \p_xv -\p^2_xv=h(x), \ \ x\in (0,1), t>0,
\ee
under periodic boundary conditions. Here $\Om\subset \R^3$ is a bounded domain with sufficiently smooth boundary, $\al$ and $b$ are given positive parameters, $h\in L^2$ is a given function, $f(\cdot), g(\cdot)$ are given nonlinear terms satisfying natural growth conditions guaranteing the global existence of corresponding initial boundary value problems. \\

Our main goal is to show that any arbitrary  given solution of the initial boundary value problem for each of equations \eqref{0nsv} -\eqref{0kdv} can be stabilized  by using  a feedback controller depending only on finitely many large spatial-scale parameters, such as the  low Fourier modes or other finite rank spatial interpolant operators that are based on coarse mesh spatial measurements.
A common feature of these equations is that the semigroups generated by initial boundary value problems are asymptotically compact  semigroups which have finite-dimensional global attractors in corresponding phase spaces (see e.g. \cite{CKP}, \cite{Ha}, \cite{K86}-\cite{KaZe}, \cite{Pr}, \cite{Te}, \cite{WaYa} and references therein). In \cite{AzTi}, \cite{KaTi} and \cite{LuTi} the authors introduced a feedback control algorithm based on the above mentioned unified approach employing finitely many parameters for the global stabilization of  solutions  for  a number nonlinear   nonlinear parabolic equations and to the  damped nonlinear wave equations. In this work we develop further this algorithm  and extend/demonstrate its applicability to a larger  class of dissipative PDEs. However, for the sake of clarity, and in order to fix ideas, we focus in this work on the case of low Fourier modes feedback controller. Nonetheless, it is worth stressing that  our results and tools are equally valid when using other feedback controllers that are employing other spatial coarse mesh  interpolants and determining functionals. This will be the subject of forthcoming publication. \\

Throughout this paper  we will use the following notations
\begin{itemize}
\item $(\cdot,\cdot)$ and $\|\cdot\|$ will denote the inner product and the norm for both $L^2(\Om)$ and $\left(L^2(\Om)\right)^3$.  For
$u=(u_1,u_2,u_3),v=(v_1,v_2,v_3)\in \left(L^2(\Om)\right)^3$, $(u,v):=\int\limits_\Om\sum\limits_{j=1}^3u_jv_jdx$.
\item $H$ will denote the closure of $\V:=\left\{u\in \left(C_0^\infty (\Om)\right)^3: \Nx \cdot u=0\right\}$ in
$\left(L^2(\Om)\right)^3$, and $V$ is the closure of $\V$ in $\left(H_0^1(\Om)\right)^3$.
\item $\dot{H}_{per}^k(0,1):=\left\{\phi \in H^k_{per}(0,1): \int_0^1 \phi(x)dx=0, k=1,2,\cdots\right\}$.
\item $w_1,w_2,\cdots, w_n,\cdots$ will denote the eigenfunctions of the Stokes operator and the Laplace operator $-\Dx (-\frac{d^2}{dx^2})$ under the homogeneous Dirichlet boundary condition and of the operator $-\frac{d^2}{dx^2}$ in $\dot H^2_{per}(0,1)$ corresponding to eigenvalues $\la_1,\la_2,\cdots,\la_n,\cdots$.

\end{itemize}
In what follows we are using the following inequalities:
\begin{itemize}
\item {\it Young\rq{}s inequality}
For each $a,b>0$ and  $\epsilon>0$
\be\label{Young}
ab \le \epsilon \frac{a^p}p+ \frac1{\epsilon^{q/p}}\frac{b^q}q,
\ee
where $p,q>0$ and $\frac1p+\frac1q=1.$

\item {\it Poincar\'e-Friedrichs  inequality}
\be\label{PF}
\|u\|^2\le \la_1^{-1}\|\Nx u\|^2, \ \ \forall u\in  H_0^1 (\dot H_{per}^1, V)
\ee
and the inequality
\be\label{PFN}
\sum\limits_{k=N+1}^\infty|(u, w_k)|^2\le \la_{N+1}^{-1}\|\Nx u\|^2, \ \ \forall u\in  H_0^1 (\dot H_{per}^1, V),
\ee
where $\la_1$ is the first and $\la_{N+1}$ is the $(N+1)$th eigenvalue of the operator $-\Dx$ (or $-\frac{d^2}{dx^2} $) under the Dirichlet or periodic boundary conditions.
\item {\it The $1D$ Agmon  inequality}
\be\label{Ag}
\max_{x\in [0,1]}|u(x)|^2\le c_0 \|u\|\|u'\|, \ \ \forall u\in H_0^1(0,1) (\dot H_{per}^1(0,1)).
\ee
\item {\it The $3D$ Ladyzhenskaya inequality}
\be\label{Lad3}
\|u\|^2_{L^4(\Om)}\le b_0\|u\|^{\frac12}\|\Nx u\|^{\frac32}, \ \ \forall u\in V,
\ee
where $\Om \subseteq \R^3$.

\item {\it The $1D$ Gagliardo-Nirenberg inequality}
\be\label{GN}
\|u^{(j)}\|_{L^p(0,1)}\le \beta\|u\|^{1-\al}\|u^{(m)}\|^\al, \ \ \forall u\in H^2(0,1)\cap H_0^m(0,1), (\dot H_{per}^m(0,1)).
\ee
where $p>2, m=1,2, \ \ \frac jm\le \al\le 1, \ \ \al =\left(\frac12+j-\frac1p\right)m^{-1}.$
\end{itemize}
\section{Stabilization employing finitely many Fourier modes\\ for Navier-Stokes-Voight  equations}
\subsection{ 3D Navier-Stokes-Voigt equations}
In this section we study the problem of global stabilization of 3D Navier-Stokes-Voigt equations by finitely many Fourier modes.
Suppose that $v$ is a given weak solution of the problem
\begin{equation}\label{v1}
\begin{cases}
\p_t v-\nu \Dx v-\al^2 \Dx \p_tv+(v\cdot \Nx)v+\Nx p=h, \ x \in \Om, t
 \in \R^+,\\
\Nx\cdot v=0 , \ x \in  \Om, t \in \R^+; v\Big|_{\p \Om}=0, \ \
\ t\in \R^+,\\
v(x,0)=v_0(x),\ x \in \Om ,
\end{cases}
\end{equation}
in a bounded domain $\Om\subset \R^3$ with sufficiently smooth
boundary $\partial \Om$, $v_0\in V$ is a given initial data and $h\in L^2(\Om)$ is a given source term. The equatin \eqref{v1} was introduced by A.P.Oskolkov in \cite{Osk1} as a model
of motion of linear viscoelastic non-Newtonian fluids. This equations were also proposed in \cite{CaLuTi} as a regularisation , for small values of the parameter $\al$, of the 3D Navier-Stokes equations.
Here  $v=v(x,t)$ is the velocity vector
field, $p$ is the pressure, $\nu>0$  is the kinematic viscosity,  and
$h$ is a given force field. The positive parameter $\al$ is
characterizing the elasticity of the fluid in the sense
that $\frac{\al^2}{\nu}$ is a characteristic relaxation time scale
of the viscoelastic material.
Our aim is to stabilize any strong solution $v$. That is for any initial data $u_0$ we will show  that the solution $u$ of of the feedback control
system system converges to  function $v$ as $t\rw \infty$, provided $N$ is large enough depending on physical parameters of the equation \eqref{v1}. That is  we will show that each solution of the system, for $N$ large enough,
\begin{equation}\label{u1}
\begin{cases}
\p_t u-\Dx (\nu u+\al^2  \p_tu)+(u\cdot \Nx)u+\Nx \tilde p =-\mu \sum\limits_{k=1}^N(u-v,w_k)w_k+h, x \in \Om, t
 >0,\\
\Nx\cdot u=0 , \ x \in  \Om, t \in \R^+; u\Big|_{\p \Om}=0, \ \
\ t\in \R^+,\\
u(x,0)=u_0(x), \ \ x\in \Om,
\end{cases}
\end{equation}
where $u_0 \in V$ is given, will tend to solution of the problem \eqref{v1} in $V$ norm as $t\rw \infty.$
Existence and uniqueness of solution $u\in C(\R^+, V)$  can be shown exactly the same way as it is done for the problem  \eqref{v1}. It is well known that the problem \eqref{v1} generates a continuous bounded dissipative semigroup $S(t): V\rw V$ with and absorbing ball $B_0(r_0)\subset V$ (see e.g. \cite{K86}, \cite{KaTi}.)
 In fact taking inner product of \eqref{v1} in $L^2(\Om)$ with $v$ we get
\be\label{NSL2}
\frac12\frac d{dt}\left[\|v(t)\|^2+\al^2 \|\Nx v(t)\|^2\right]+\nu \|\Nx v(t)\|^2=(h,v(t)).
\ee
Employing Young\rq{}s inequality \eqref{Young} and the Poincar\'e-Friedrichs inequality \eqref{PF} we obtain
$$
\frac d{dt}\left[\|v(t)\|^2+\al^2 \|\Nx v(t)\|^2\right]+
\nu \|\Nx v(t)\|^2\le
\frac 1{\nu\la_1}\|h\|^2,
$$
which implies that
$$
\frac d{dt}\left[\|v(t)\|^2+\al^2\|\Nx v(t)\|^2\right]+
d_0\left[\|v(t)\|^2+\al \|\Nx v(t)\|^2\right]\le
\frac 1{\nu\la_1}\|h\|^2,
$$
where $d_0=\frac\nu2\min\{\la_1, \frac{1}{\al^2}\}$.
From this inequality we obtain that
$$
\|v(t)\|^2+\al ^2\|\Nx v(t)\|^2\leq e^{-d_0 t}\left[\|v_0\|^2+\al \|\Nx v_0\|^2\right]+\frac 1{\nu\la_1d_0}\|h\|^2.
$$
That is each solution $u\in C(\R^+; H_0^1(\Om))$ of the problem the norm $\|\Nx v(t)\|$ is uniformly bounded on $\R^+,$ moreover
 there exists $t_0>0$ such that
\be\label{abs1}
\|v(t)\|^2+\al^2 \|\Nx v(t)\|^2\leq \frac 2{\nu\la_1d_0}\|h\|^2, \ \ t\geq t_0.
\ee
So the ball $B_0(r_0)\subset V$ with the radius
\be\label{radius}
r_0=\frac 2{\nu\la_1d_0\al^2}\|h\|^2
\ee
is an absorbing ball for the problem \eqref{v1}.\\

By using standard Galerkin method it can be shown that system \eqref{u1} has also a global strong solution.
Moreover it has an absorbing ball in $V$. Really, multiplying \eqref{u1} by $u$ in $L^2(\Om)$ ve obtain
\begin{multline*}\label{absu}
\begin{split}
\frac12\frac d{dt}[\|u(t)\|^2&+\al^2 \|\Nx u(t)\|^2]+\nu \|\Nx u(t)\|^2=\\&-\mu \sum_{k=1}^N|(u(t),w_k)|^2+\mu \sum_{k=1}^N(u(t),w_k)(v(t),w_k)+(h,u(t))\\&
\le \frac \mu4 \sum_{k=1}^N|(v(t),w_k)|^2+\|h\|\|u(t)\|\le \frac \mu4 \|v(t)\|^2+\frac \nu2 \|\Nx u(t)\|^2
+\frac1{2\nu\la_1}\|h\|^2.
\end{split}
\end{multline*}
Thanks to \eqref{abs1} we have
$$
\frac d{dt}\left[\|u(t)\|^2+\al ^2\|\Nx u(t)\|^2\right]+d_0\left[\|u(t)\|^2+\al^2 \|\Nx u(t)\|^2\right]\le \frac\mu{2\la_1}r_0+\frac1{\nu\la_1}\|h\|^2.
$$
From this inequality we deduce that
$$
\|u(t)\|^2+\al^2 \|\Nx u(t)\|^2\leq e^ {-d_0t}\left[\|u_0\|^2+\al^2 \|\Nx u_0\|^2\right]  +   \frac1{d_0}\left[\frac\mu{2\la_1}r_0+\frac1{\nu\la_1}\|h\|^2 \right]
$$
Hence the ball $B_0(r_1)\subset V $ with radius $r_1=\frac2{d_0\al^2}\left[\frac\mu{2\la_1}r_0+\frac1{\nu\la_1}\|h\|^2 \right]$ is an absorbing ball for problem \eqref{u1}.

\begin{theorem}\label{Fcon1}
Suppose that
\be\label{uv1}
\mu \ge
 C_1(\nu)r_0^2b_0^4, \ \ \ \nu-4\la_{N+1}^{-1} C_1(\nu)r_0^2b_0^4>0,
\ee
where $C_1(\nu)=\frac14\left(\frac3{2\nu}\right)^3$, $b_0$ is a constant in the Ladyzhenskaya  inequality  \eqref{Lad3} and $r_0$ is the radius of the absorbing ball of the problem \eqref{v1}.\\
Then there exists $t_0>0$  such that
the following inequality holds true
\be\label{uv2}
\|\Nx u(t)-\Nx v(t)\|\leq k_0e^{-\kappa (t-t_0)}\|\Nx u(t_0)-\Nx v(t_0)\|, \ \ t\ge t_0,
\ee
where $k_0=1+\frac1{\al^2\la_1}, \ \ \kappa=\frac\nu4\min\{\la_1,\al^{-2}\}.$
\end{theorem}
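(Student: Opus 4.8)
\emph{Proof proposal.} I would begin by setting $w:=u-v$. Subtracting \eqref{v1} from \eqref{u1} and writing the difference of the convective terms as $(u\cdot\Nx)u-(v\cdot\Nx)v=(u\cdot\Nx)w+(w\cdot\Nx)v$, the difference solves
\[
\p_t w-\Dx(\nu w+\al^2\p_t w)+(u\cdot\Nx)w+(w\cdot\Nx)v+\Nx(\tilde p-p)=-\mu\sum_{k=1}^N(w,w_k)w_k,
\]
with $\Nx\cdot w=0$, $w|_{\p\Om}=0$, $w(0)=u_0-v_0$. Since \eqref{u1} has a global strong solution $u\in C(\R^+;V)$ (Galerkin, exactly as for \eqref{v1}), all the manipulations below are legitimate. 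The plan is the standard energy argument: pair the difference equation with $w$ in $L^2(\Om)$ and close a differential inequality for the Voigt energy $E(t):=\|w(t)\|^2+\al^2\|\Nx w(t)\|^2$.

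Taking $(\cdot,w)$, the pressure term drops (since $\Nx\cdot w=0$, $w|_{\p\Om}=0$), the term $((u\cdot\Nx)w,w)$ drops (since $\Nx\cdot u=0$), and---$\{w_k\}$ being the $L^2$-orthonormal Stokes eigenfunctions---the feedback term contributes $-\mu\sum_{k=1}^N|(w,w_k)|^2$. Thus
\[
\tfrac12\tfrac{d}{dt}E+\nu\|\Nx w\|^2+\mu\sum_{k=1}^N|(w,w_k)|^2=-\bigl((w\cdot\Nx)v,w\bigr).
\]
The only genuinely delicate step is bounding the right-hand side, since in $3D$ the space $V$ is not contained in $L^\infty$. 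I would use H\"older and the $3D$ Ladyzhenskaya inequality \eqref{Lad3}, $|((w\cdot\Nx)v,w)|\le\|w\|_{L^4}^2\|\Nx v\|\le b_0\|w\|^{1/2}\|\Nx w\|^{3/2}\|\Nx v\|$, and then the key dissipativity fact from \eqref{abs1}--\eqref{radius}: there is $t_0>0$ with $\|\Nx v(t)\|^2\le r_0$ for all $t\ge t_0$. This renders the nonlinear term effectively subcritical, and Young's inequality \eqref{Young} with exponents $\tfrac43,4$ (absorbing the $\|\Nx w\|^2$ factor into $\tfrac\nu2\|\Nx w\|^2$, which is where the constant $C_1(\nu)=\tfrac14(\tfrac3{2\nu})^3$ comes from) yields, for $t\ge t_0$,
\[
\tfrac{d}{dt}E+\nu\|\Nx w\|^2+2\mu\sum_{k=1}^N|(w,w_k)|^2\le 2C_1(\nu)b_0^4 r_0^2\,\|w\|^2 .
\]

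Now I would split $\|w\|^2=\sum_{k=1}^N|(w,w_k)|^2+\sum_{k>N}|(w,w_k)|^2$ and bound the tail by \eqref{PFN}, $\sum_{k>N}|(w,w_k)|^2\le\la_{N+1}^{-1}\|\Nx w\|^2$. The low-mode part of $2C_1 b_0^4 r_0^2\|w\|^2$ is absorbed into the feedback term precisely because $\mu\ge C_1(\nu)r_0^2 b_0^4$, and the tail into the viscous term precisely because $\nu-4\la_{N+1}^{-1}C_1(\nu)r_0^2 b_0^4>0$ (so the surviving coefficient of $\|\Nx w\|^2$ exceeds $\tfrac\nu2$); hence \emph{both} conditions in \eqref{uv1} are needed, one for what only the controller sees and one for what only the viscosity sees, and neither can be dropped. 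This leaves $\tfrac{d}{dt}E+\tfrac\nu2\|\Nx w\|^2\le 0$ on $[t_0,\infty)$. Finally, \eqref{PF} gives $E\le(\la_1^{-1}+\al^2)\|\Nx w\|^2\le\tfrac{2}{\min\{\la_1,\al^{-2}\}}\|\Nx w\|^2$, hence $\tfrac{d}{dt}E+\kappa E\le 0$ with $\kappa=\tfrac\nu4\min\{\la_1,\al^{-2}\}$; Gr\"onwall gives $E(t)\le E(t_0)e^{-\kappa(t-t_0)}$, and combining $\al^2\|\Nx w(t)\|^2\le E(t)$ with $E(t_0)\le\al^2(1+\tfrac1{\al^2\la_1})\|\Nx w(t_0)\|^2=\al^2 k_0\|\Nx w(t_0)\|^2$ produces the exponential decay \eqref{uv2}. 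The hard part throughout is the $3D$ trilinear term $((w\cdot\Nx)v,w)$, which is tamed only by discarding $((u\cdot\Nx)w,w)$ via incompressibility and exploiting the uniform bound $\|\Nx v\|\le r_0^{1/2}$ on $[t_0,\infty)$ coming from the absorbing ball of \eqref{v1}; this is also why $t_0>0$ (rather than $0$) and why the factor $r_0^2$ sits inside $C_1$. Everything else is routine bookkeeping.
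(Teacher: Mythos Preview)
Your proof is correct and follows essentially the same approach as the paper's: derive the difference equation for $z=u-v$, test with $z$, bound the trilinear term $((z\cdot\Nx)v,z)$ via the Ladyzhenskaya inequality \eqref{Lad3} and Young's inequality (producing the constant $C_1(\nu)$), invoke the absorbing-ball bound $\|\Nx v(t)\|^2\le r_0$ for $t\ge t_0$, split $\|z\|^2$ into low and high Fourier modes using \eqref{PFN}, absorb each piece via the two hypotheses in \eqref{uv1}, and finish with Poincar\'e--Friedrichs plus Gr\"onwall to obtain the stated decay with $\kappa=\tfrac{\nu}{4}\min\{\la_1,\al^{-2}\}$ and $k_0=1+\tfrac{1}{\al^2\la_1}$. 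The organization, the constants, and the use of each hypothesis match the paper's proof step for step.
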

\begin{proof}
If $v$ is a solution of the problem \eqref{v1} and $u$ is a solution of system \eqref{u1} ,
then the function $z=u-v$ is a solution of the system
\begin{equation}\label{w1}
\begin{cases}
 \p_tz-\nu \Dx z-\al^2 \Dx \p_tz+(u\cdot \Nx)z+(z\cdot \Nx)v+\Nx \pi =-\mu \sum\limits_{k=1}^N(z,w_k)w_k, x \in \Om, t
 >0,\\
\Nx\cdot z=0 , \ x \in  \Om, t \in \R^+; \ \ z\Big|_{\p \Om}=0, \ \ t>0,
\\
z(x,0)=z_0(x),\ x \in \Om ,
\end{cases}
\end{equation}
where $z_0:=u_0-v_0, \pi:=\tilde p- p$.\\

Inner product of \eqref{w1} in $L^2(\Om)$ with $z$ gives
\be\label{w4}
\frac12\frac d{dt}\left[\|z(t)\|^2+\al^2\|\Nx z(t)\|^2\right]+\nu \|\Nx z(t)\|^2+((z(t)\cdot \Nx)v(t),z(t))=-\mu \sum_{k=1}^N|(z(t),w_k)|^2.
\ee
Thanks to the Ladyzhenskaya inequality \eqref{Lad3} and the Young inequality \eqref{Young} we have
\begin{multline}\label{w6}
\begin{split}
|((z(t)\cdot \Nx)v(t), z(t))|& \le \|z(t)\|_{L^4(\Om)}^{2}\|\Nx v(t)\|
\le b_0\|z(t)\|^ {\frac12}\|\Nx z(t)\|^ {\frac32}\|\Nx v(t)\|\\& \le \frac \nu2\|\Nx z(t)\|^2+C_1(\nu) b_0^4\|z(t)\|^2\|\Nx v(t)\|^4,
\end{split}
\end{multline}
where $C_1(\nu)=\frac14\left(\frac3{2\nu}\right)^3$.\\
Employing the fact that $\|\Nx v(t)\|^4\le r_0^2$ for $t\ge t_0$ and \eqref{w6} we obtain from \eqref{w4} the inequality
\be\label{w7}
\frac12\frac d{dt}\left[\|z(t)\|^2+\al^2\|\Nx z(t)\|^2\right]+\frac\nu2 \|\Nx z(t)\|^2=-\mu \sum_{k=1}^N|(z(t),w_k)|^2+C_1(\nu)r_0^2b_0^4\|z(t)\|^2.
\ee
Since
$$
\|z(t)\|^2=\sum_{k=1}^N|(z(t),w_k)|^2+\sum_{k=N+1}^\infty|(z(t),w_k)|^2\le \sum_{k=1}^N|(z(t),w_k)|^2+\la_{N+1}^{-1}\|\Nx z(t)\|^2
$$
we obtain from \eqref{w7} the inequality
\begin{multline}\label{z7}
\begin{split}
\frac12\frac d{dt}\left[\|z(t)\|^2+\al^2\|\Nx z(t)\|^2\right]&+\left(\frac\nu2- \la_{N+1}^{-1}C_1(\nu)r_0^2b_0^4\right)\|\Nx z(t)\|^2\\&=\left(-\mu+C_1(\nu)r_0^2b_0^4\right) \sum_{k=1}^N|(z(t),w_k)|^2.
\end{split}
\end{multline}
Taking into account  conditions \eqref{uv1}, \eqref{z7} and the Poincar\'e-Friedrichs inequality \eqref{PF} we obtain from \eqref{z7}
the inequality
$$
\frac d{dt}\left[\|z(t)\|^2+\al^2\|\Nx z(t)\|^2\right]+\kappa\left[\|z(t)\|^2+\al^2\|\Nx z(t)\|^2\right]\leq 0, \ \forall t\geq t_0,
$$
where $\kappa=\frac\nu4\min\{\la_1,\al^{-2}\}$.
Integrating the last inequality over the interval $(t_0,t)$ we get
$$
\|\Nx u(t)-\Nx v(t)\|\leq k_0\|\Nx u(t_0)-\Nx v(t_0)\|e^{-\kappa(t-t_0)}, \ \ \forall t\ge t_0.
$$
Hence the inequality \eqref{uv2} holds true.
\end{proof}

\section{ Stabilization employing finitely
many Fourier modes for  damped nonlinear wave equations.}
\subsection{Benjamin-Bona-Mahony-Burgers equation}
Let $v\in C(\R^+; H^1_0(0,1))$ be a strong solution of the generalized Benjamin-Bona-Mahony(BBMB) equation
\be\label{bbm1}
\p_t v-\p_x^2\p_tv+f(v) \p_xv -\p_x^2v=h(x), \ \ x\in (0,1), t>0,
\ee
satisfying the initiial conditiion
\be\label{bbm2}
v(x,0)=v_0(x), \ \ x\in (0,1)
\ee
and the Dirichlet  boundary conditions.
\be\label{bbm3}
v(0,t)=v(1,t)=0, \ \  \forall t>0.
\ee

Here $h\in L^2(0,1)$ is a given source term and $f(\cdot)\in C^1(\R)$ is a given function.\\
First energy equality for this problem is the equality
$$
\frac12\frac d{dt}\left[\|v(t)\|^2+\|\p_xv(t)\|\right]^2+\|\p_xv(t)\|^2=(h,v(t)).
$$
Applying the Poincar\'e-Friedrichs inequality \eqref{PF} after some manipulations we get
\begin{multline*}
\begin{split}
\frac d{dt}\left[\|v(t)\|^2+\|\p_xv(t)\|\right]^2+\la_1\|v(t)\|^2+\|\p_xv(t)\|^2&\le 2\|h\|\|v(t)\|\\&\le
 \frac{\la_1}2\|v(t)\|^2+\frac2{\la_1}\|h\|^2.
\end{split}
\end{multline*}
This inequlity implies the inequality
\be\label{sim}
\frac d{dt}\left[\|v(t)\|^2+\|\p_xv(t)\|^2\right]+\kappa_1\left[\|v(t)\|^2+\|\p_xv(t)\|\right]^2\le \frac2{\la_1}\|h\|^2,
\ee
where $\kappa_1=\min\{\frac{\la_1}2,1\}$.\\
Therefore there exists $t_0>0$ such that
\be\label{bbm4}
\|v(t)\|^2+\|\p_xv(t)\|^2\le \frac{4}{\la_1\kappa_1}:=R_1, \ \ \ \ t\ge t_0.
\ee
We propose the following feedback system,
to stabilize the solution $v(x, t)$  of the problem \eqref{bbm1}-\eqref{bbm3}:
\be\label{bbu1}
\begin{cases}
\p_t u-\p_x^2\p_tu+u\p_xu -\p_x^2u=-\mu\sum\limits_{k=1}^N(u-v,w_k)w_k+h(x), \ \ x\in (0,1), t>0,\\
u(x,0)=u_0(x), \ \ x\in (0,1),\\
u(0,t)=u(1,t)=0, \ \ \ t>0.
\end{cases}
\ee
Our aim is to show that for a given $u_0\in H_0^1(0,1)$ and properly choosen $\mu$ and $N$ the function
$\|\p_xu(t)-\p_xv(t)\|$ tends to zero as $t\rw \infty$ with an exponential rate.\\
Multiplication of \eqref{kdu1} by $u$ in $L^2$ gives
\begin{multline*}
\begin{split}
\frac12\frac d{dt}\left[\|u\|^2+\|\p_xu\|^2\right] &+\|\p_xu\|^2=-\mu\sum_{k=1}^N|(u,w_k)|^2+\mu\sum_{k=1}^N(u,w_k)(v,w_k)+(h,u)\\&\le
\frac{\mu}2\sum_{k=1}^N|(v,w_k)|^2+\frac{\la_1}2\|u\|^2+\frac 2{\la_1}\|h\|^2.
\end{split}
\end{multline*}
From this inequality taking into account \eqref{bbm4}, similar to \eqref{sim}, we obtain  for $\La(t):=\|u(t)\|^2+\|\p_xu(t)\|^2$ the following inequality
$$
\frac d{dt} \La(t) +a_1\La(t)\le \frac{\mu R_1}2+\frac 2{\la_1}\|h\|^2, \ \ \forall t\ge t_0.
$$
The last inequality implies that there exists some $t_1\ge t_0$
such that
\be\label{bbm4a}
\|u(t)\|^2+\|\p_xu(t)\|^2\le R_2, \ \ \ \ t\ge t_1,
\ee
where $R_2=\mu R_1+\frac 2{\la_1}\|h\|^2.$\\

It is clear that the function $z=u-v$ is a solution of the problem
\be\label{bbz1}
\begin{cases}
\p_t z+\p^2_x\p_tz+f(u) \p_xz +(f(u)-f(v))\p_x v-\p_x^2z=-\mu\sum\limits_{k=1}^N(z,w_k)w_k, \ \ x\in (0,1), t>0,\\
z(x,0)=u_0(x)-v_0(x), \ \ x\in (0,1),\\
z(0,t)=z(1,t)=0, \ \ t>0.
\end{cases}
\ee
Multiplying \eqref{bbz1} in $L^2$ by $z$ we obtain
\begin{multline}\label{bbz4}
\begin{split}
\frac12\frac d{dt}&\left[\|z\|^2+\|\p_xz\|^2\right]+\|\p_xz\|^2\\&+
(f(u)\p_x z,z)+f\rq{}(\theta u+(1-\theta)v)z^2, \p_x v)=-\mu\sum_{k=1}^N|(z,w_k)|^2, \ \ \theta \in (0,1).
\end{split}
\end{multline}
Thanks to the  the Sobolev inequality
\be\label{int}
\|z\|_{L^\infty(0,1)}\le \|\p_xz\|_{L^2(0,1)}
\ee
we have
\begin{multline}\label{bbz5}
\begin{split}
\Big|(f(u)\p_x z,z)&+f\rq{}(\theta u+(1-\theta)v)z^2, \p_x v)\Big|\\& \le D_1\|\|z\|\|\p_xz\|+D_2\|\p_xv\|\|z\| ^2\le \frac12\|\p_xz\|^2+\left( \frac12 D_1^2+D_2\|\p_x v\|\right)\|z\|^2,
\end{split}
\end{multline}
where
$$
D_1=\max\limits_{|s|\le R_2}|f(s)|, \ \ \ D_2=\max\limits_{|s|\le R_1+R_2}|f\rq{}(s)|.
$$
Thanks to \eqref{bbz5}, \eqref{bbm4} and \eqref{bbm4a} we obtain from \eqref{bbz4} the inequality
\be\label{bbz4a}
\frac12\frac d{dt}\left[\|z\|^2+\|\p_xz\|^2\right]+\frac12 \|\p_xz\|^2=-\mu\sum_{k=1}^N|(z,w_k)|^2+\left(\frac12 D_1^2+D_2\sqrt{R_1}\right)\|z\|^2, \ \forall t\ge t_2.
\ee
Suppose that $N$  and
$\mu$ are  large enough such that
\be\label{Nmu}
\la_{N+1}^{-1}\left(D_1^2+2D_2\sqrt{R_1}\right)\le \frac12 ,\ \ \mbox{and} \ \ \mu \ge \frac12 D_1^2+D_2\sqrt{R_1}.
\ee
Then \eqref{bbz4a} yields
$$
\frac d{dt}\left[\|z(t)\|^2+\|\p_xz(t)\|^2\right]+\frac12\|\p_xz(t)\|^2\le 0.
$$
Applying Poincar\'e-Friedrichs inequality \eqref{PF} we get
$$
\frac d{dt}\left[\|z(t)\|^2+\|\p_xz(t)\|^2\right]+ \frac14\min\{1,\la_1\}\left[\|z(t)\|^2+\|\p_xz(t)\|^2\right]\le 0, \ t\ge t_1.
$$
Hence
\be\label{bbex}
\|z(t)\|^2+\|\p_xz(t)\|^2\le e^{-a_0(t-t_1)}\left(\|z(t_1)\|^2+\|\p_xz(t_1)\|^2\right), \ \ \ \ \forall t\ge t_1,
\ee
where $a_0=\frac14\min\{1,\la_1\}.$
So we proved the following theorem
\begin{theorem}\label{bbT}
Suppose \eqref{Nmu} are satisfied, then there exists a number $t_1>0$ such that

\be\label{bbex}
\|\p_xu(t)-\p_x v(t)\|^2\le e^{-a_0(t-t_1)} \left(\|\p_xu(t_1)-\p_x v(t_1)\|^2\right), \ \ \ \ \forall t\ge t_1.
\ee
\end{theorem}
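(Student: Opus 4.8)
The plan is to show that the difference $z=u-v$ decays exponentially in the $H^1_0$-norm by deriving a differential inequality for the energy-type quantity $\Lambda(t):=\|z(t)\|^2+\|\p_xz(t)\|^2$ and then invoking Gr\"onwall. First I would record that, since $v$ solves \eqref{bbm1} and $u$ solves the feedback system \eqref{bbu1}, the difference $z$ satisfies the system \eqref{bbz1}; the subtlety here is that the nonlinearity $f(u)\p_xu - f(v)\p_xv$ must be split as $f(u)\p_xz + (f(u)-f(v))\p_xv$ and the second term rewritten, via the mean value theorem, as $f'(\theta u+(1-\theta)v)z\,\p_xv$ for some $\theta\in(0,1)$. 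I would then take the $L^2$ inner product of \eqref{bbz1} with $z$, using that $(\p_t z - \p_x^2\p_t z,z)=\tfrac12\tfrac{d}{dt}(\|z\|^2+\|\p_xz\|^2)$ and $(-\p_x^2z,z)=\|\p_xz\|^2$, to obtain \eqref{bbz4}.

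Next I would bound the two nonlinear terms. Using the absorbing-ball bounds \eqref{bbm4} and \eqref{bbm4a}, the arguments $u(t)$ and $v(t)$ stay (for $t$ large) in fixed bounded sets, so $|f(u(t))|\le D_1$ and $|f'(\theta u+(1-\theta)v)|\le D_2$ pointwise, with $D_1,D_2$ the stated maxima. Combining this with the one-dimensional Sobolev embedding \eqref{int}, $\|z\|_{L^\infty}\le\|\p_xz\|$, and Young's inequality \eqref{Young} produces the estimate \eqref{bbz5}, with the $\tfrac12\|\p_xz\|^2$ term absorbed into the good dissipative term $\|\p_xz\|^2$ on the left-hand side. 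This leaves \eqref{bbz4a}: a $\tfrac12\|\p_xz\|^2$ of dissipation survives, balanced against $\mu\sum_{k=1}^N|(z,w_k)|^2$ and a term $(\tfrac12 D_1^2+D_2\sqrt{R_1})\|z\|^2$ coming from the source.

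The decisive step is to dominate $\|z\|^2$ using the spectral split $\|z\|^2=\sum_{k=1}^N|(z,w_k)|^2+\sum_{k>N}|(z,w_k)|^2$ together with \eqref{PFN}, which gives $\sum_{k>N}|(z,w_k)|^2\le\la_{N+1}^{-1}\|\p_xz\|^2$. Inserting this, the low-mode part is controlled by the feedback term provided $\mu\ge\tfrac12 D_1^2+D_2\sqrt{R_1}$, and the high-mode part is absorbed into the dissipation provided $\la_{N+1}^{-1}(D_1^2+2D_2\sqrt{R_1})\le\tfrac12$; these are exactly hypotheses \eqref{Nmu}. Under these conditions one is left with $\tfrac{d}{dt}\Lambda(t)+\tfrac12\|\p_xz(t)\|^2\le 0$, and a further application of the Poincar\'e--Friedrichs inequality \eqref{PF} turns $\tfrac12\|\p_xz\|^2$ into $a_0\Lambda(t)$ with $a_0=\tfrac14\min\{1,\la_1\}$, yielding $\tfrac{d}{dt}\Lambda+a_0\Lambda\le 0$.

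Finally I would integrate this differential inequality over $(t_1,t)$, where $t_1$ is chosen so that all the absorbing-ball bounds \eqref{bbm4}, \eqref{bbm4a} are in force, to get $\Lambda(t)\le e^{-a_0(t-t_1)}\Lambda(t_1)$; discarding the nonnegative $\|z(t)\|^2$ term on the left then gives the claimed bound \eqref{bbex} on $\|\p_xu(t)-\p_xv(t)\|^2$. I expect the main obstacle to be bookkeeping rather than conceptual: one must be careful that the constants $D_1,D_2$ genuinely depend only on $R_1,R_2$ (hence only on the data and on $\mu$, not on the particular solutions), and that the time $t_1$ after which $u$ enters its absorbing ball is itself independent of the approach; also the interplay between the two smallness conditions in \eqref{Nmu} — one fixing $\mu$ from below, the other fixing $N$ from below once $\mu$ (hence $D_1$) is known — must be arranged in the right order so there is no circularity.
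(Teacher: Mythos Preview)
Your proposal is correct and follows essentially the same route as the paper: derive \eqref{bbz4} by testing \eqref{bbz1} with $z$, estimate the nonlinear terms via the $L^\infty$ Sobolev bound \eqref{int} and the absorbing-ball estimates \eqref{bbm4}, \eqref{bbm4a} to reach \eqref{bbz4a}, then use the spectral splitting with \eqref{PFN} and the hypotheses \eqref{Nmu} to close a Gr\"onwall inequality for $\|z\|^2+\|\p_xz\|^2$. Your closing remark about the order in which $\mu$ and $N$ must be chosen (since $D_1,D_2$ depend on $R_2$, hence on $\mu$) is a genuine subtlety that the paper does not explicitly address.
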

\begin{remark} Statement of the Theorem \ref{bbT} holds also for solutions of BBMB  equation under the periodic boundary conditions
$$
u(x,t)=u(x+1,t), \ \  \forall x\in \R, t>0, \ \ \int_0^1u(x,t)dx=0.
$$
\end{remark}
\subsection{ Korteweg-de Vries-Burgers equation}
In this section  we consider stabilization of Korteveg-de Vries-Burgers equation.
Let $v\in C(\R^+; \dot H_{per}^1(0,1))$ be a strong solution of the equation
\be\label{kdv1}
\p_t v+\p_x^3v+v \p_xv -\p^2_xv=h(x), \ \ x\in \R, t>0,
\ee
satisfying the initial condition
\be\label{kdv2}
v(x,0)=v_0(x), \ \ x\in \R
\ee
and the periodic boundary conditions.
\be\label{kdv3}
v(x,t)=v(x+1,t),\ \ \forall x\in \R, t>0.
\ee
Here $v_0\in \dot{H}_{per}^1(0,1)$ is a given intial function, $h\in L_{per}^2(0,1)$ is a given source term with $\int_0^1h(x)dx=0.$\\
There have been many studies on the long-time behavior of solutions of the KdVB equation  (see e.g. \cite{Ji}, \cite{LuLu}, \cite{Zh}).
Our goal here is to show that any solution of the initial boundary value problem \eqref{kdv1}-\eqref{kdv3}
can be stabilised by using feedback controller employing finitely many Fourier modes
as observables and controllers.\\
To this end we use the first energy equality
$$
\frac12\frac d{dt}\|v(t)\|^2+\|\p_xv(t)\|^2=(h,v(t))
$$
to infer that there exists $t_0>0$ such that
\be\label{L2es}
\|v(t)\|^2\le \frac2{\la^2_1}\|h\|^2:=\rho_1, \ \ \forall t\ge t_0.
\ee
Next we show that the problem \eqref{kdv1}-\eqref{kdv3} generates a continuous bounded dissipative semigroup also in $\dot{H}_{per}^1(0,1)$.
Multiplication of \eqref{kdv1} in $L^2$ by $v^2$ gives:
$$
\frac13\frac d{dt}(v^3,1)-2(v\p_xv,\p^2_xv)-(v^2,\p_x^2v)=(h,v^2)
$$
Using equation \eqref{kdv1}
$$
-v\p_xv=\p_tv+\p_x^3v-\p_x^2v-h
$$
we get
$$
\frac d{dt}\left[ \|\p_xv\|^2-\frac13(v^3,1)\right]+2\|\p^2_xv\|^2=-(v^2,\p_x^2v) -(h,v^2)-2(h,\p_x^2v).
$$
This equality inplies that
$$
\frac d{dt}\left[ \|\p_xv\|^2-\frac13(v^3,1)\right]+\frac32\|\p^2_xv\|^2\le 2\|v\|^2_{L^4(0,1)}+5\|h\|^2.
$$

Thanks to the Poincar\'e-Friedrichs  inequality \eqref{PF} and Young inequality  \eqref{Young} we have
\begin{multline}\label{kdv4}
\begin{split}
\frac d{dt}\left[ \|\p_xv\|^2-\frac13(v^3,1)\right]&+\la_1\left[ \|\p_xv\|^2-\frac13(v^3,1)\right]+\frac12\|\p^2_xv\|^2
\le\frac{\la_1}3|(v^3,1)|\\&+2\|v\|_{L^4}^{2}+5\|h\|^2\le 3\|v\|_{L^4}^{4}+5\|h\|^2+\left(\frac{\la_1}3\right)^4+1.
\end{split}
\end{multline}
Next we us the Gagliardo-Nirenberg inequality \eqref{GN}  with $p=4$
$$
 \|u\|^4_{L^4}\le \beta ^4\|\p_x^2u\|^{\frac12}\|u\|^{\frac72}.
$$
 and the Young inequality \eqref{Young} we get
$$
3\|v\|_{L^4}^{4}\le 3\beta^4\|\p_x^2v\|^{\frac12}\|v\|^{\frac72}\le \frac12\|\p_x^2v\|^2
+\frac1{2^{1/3}}\left(3\beta^4\|v\|^{7/2}\right)^{4/3}.
$$

Employing last four estimates we deduce from \eqref{kdv4} the inequality
\be\label{kdv5}
\frac d{dt}\left[ \|\p_xv\|^2-\frac13(v^3,1)\right]+\left[\|\p_xv\|^2-\frac13(v^3,1)\right]\le
\rho_2, \ \ \ \ \  t\ge t_0,
\ee
where
$$\rho_2:=\frac1{2^{1/3}}\left(3\beta^4\rho_1^{7/4}\right)^{4/3}+5\|h\|^2+\left(\frac{\la_1}3\right)^4+1.$$
  From \eqref{kdv5} we infer that there exists $t_1\ge t_0$ such that
$$
\|\p_xv\|^2-\frac13(v^3,1)\le 2\rho_2 \ \ \ \forall t\ge t_1\ge t_0.
$$
Since
$$
\frac13|(v^3,1)|\le \frac{\beta^3}3\|\p_xv\|^{\frac12}\|v\|^{\frac52}\le \frac14\|\p_xv\|^2+\frac45\left(\frac13\beta^3\|v\|^{\frac52}\right)^{\frac54}
$$
 and thanks to \eqref{L2es} we get
\be\label{kdv6}
\|\p_xv(t)\|^2 \le \frac83\rho_2+\frac{16}{15}\left(\frac13\beta^3\rho_1^{\frac52}\right)^{\frac54}:=M_0,  \ \ \ \forall t\ge t_1.
\ee
We propose the following feedback system,
to stabilize the solution $v(x, t)$  of the problem \eqref{kdv1}-\eqref{kdv3}:
\be\label{kdu1}
\begin{cases}
\p_t u+\p_x^3u+u \p_xu -\p_x^2u=-\mu \sum\limits_{k=1}^N(u-v,w_k)w_k+h(x), \ \ x\in (0,1), t>0,
\\
u(x,0)=u_0(x), \ \ x\in (0,1),
\\
u(x,t)=u(x+1,t), \ \ \ \forall x\in \R, t>0.
\end{cases}
\ee
Our aim is to show that for given $u_0\in \dot H_{per}^1(0,1)$ and properly choosen $\mu$ and $N$  to be large enough the function
$\|u(t)-v(t)\|$ tends to zero as $t\rw \infty$ with an exponential rate.\\
First we obtain some uniform estimates for solutions of the problem \eqref{kdu1}. The uniform estimate of $L^2$ norm of solution we obtain from the first energy equality
$$
\frac12\frac{d}{dt}\|u\|^2+\|\p_xu\|^2=-\mu\sum_{k=1}^N|(u,w_k)|^2+\mu\sum_{k=1}^N(u,w_k)(v,w_k)+(h,u).
$$
Thanks to the Cauchy-Schwarz  inequality and Poincar\'e-Friedrichs  inequality \eqref{PF} we obtain the inequality
$$
\frac{d}{dt}\|u\|^2+\la_1\|u\|^2=\frac\mu2\sum_{k=1}^N|(v,w_k)|^2+\la_1\|h\|^2\le \frac \mu2\rho_1+\frac1{\la_1}\|h\|^2.
$$
which implies, thanks to Gronwall\rq{}s inequality, that there exists $t_2\ge t_1$, depending on $u_0$, such that
$$
\|u(t)\|^2\le \la_1^{-1}\left(\mu \rho_1+\la_1^{-1}\|h\|^2\right)=:M_1, \ \ \forall t\ge t_2.
$$
To get the uniform estimate for $\|\p_xu(t)\|$ we multiply \eqref{kdu1} by $-2\p_x^2u$ in $L^2$ and obtain the equality
\be\label{Nes1}
 \frac{d}{dt}\|\p_xu\|^2+\int_0^1(\p_xu)^3dx+2\|\p_x^2u\|^2=-2\mu\sum_{k=1}^N\la_k\left[(u,w_k)^2-(u,w_k)(v,w_k)\right]-2(h,\p_x^2u).
\ee
Employing the 1D Gagliardo-Niranberg inequality \eqref{GN}, and the Young inequality \eqref{Young} we get the estimate
\be\label{gn3}
|((\p_xu)^3,1)|\le \beta^3\|u\|^{\frac{11}4}\|\p_x^2u\|^{\frac14}\le \frac12 \|\p_x^2u\|^2+\beta_1|u\|^{22/7},
\ee
where $\beta_1=\beta^{24/7}$ and
\be\label{gn2}
|(u,w_k)(v,w_k)|\le (u,w_k)^2+\frac14(v,w_k)^2, \ \ 2|(h,\p_x^2u)|\leq \frac12\|\p_x^2u\|^2+2\|h\|^2.
\ee
By using \eqref{gn3} and \eqref{gn2} we obtain from \eqref{Nes1}
\be\label{Nes2}
 \frac{d}{dt}\|\p_xu\|^2+\|\p_x^2u\|^2\le \frac\mu2\sum_{k=1}^N\la_k(v,w_k)^2+\beta_1\|u\|^{22/7}+2\|h\|^2,
\ee
Finally due to the Poincar\'e-Friedrichs  inequality \eqref{PF}, and since
$$
\sum_{k=1}^N\la_k(v,w_k)^2\le\|\p_xv(t)\|^2\le M_0, \ \ \|u(t)\|^2\le M_1, \ \  \forall t\ge t_2
$$
we deduce from \eqref{Nes2}  that there exists $t_3\ge t_2$, depending on $u_0$, such that
\be\label{Nes0}
 \|\p_xu(t)\|^2\le M_2, \ \ \forall t\ge t_3\ge t_2,
\ee
where $M_2=\mu M_0+\beta_1M_1^{11/7}+2\|h\|^2.$

It is clear that the function $z=u-v$ is a solution of the problem
\be\label{kdz1}
\begin{cases}
\p_t z+\p_x^3z+u \p_xz +z\p_xv-\p_x^2z=-\mu\sum\limits_{k=1}^N(z,w_k)w_k, \ \ x\in (0,1), t>0,\\
z(x,0)=u_0(x)-v_0(x), \ \ x\in (0,1),\\
z(x,t)=z(x+1,t),  \ \ \forall x\in \R, t>0.
\end{cases}
\ee
Multiplying \eqref{kdz1} in $L^2$ by $z$ we obtain
\be\label{zes1}
\frac12\frac d{dt}\|z(t)\|^2+(u\p_xz,z) +(z^2(t),\p_xv)+\|\p_xz(t)\|^2=-\mu\sum_{k=1}^N|(z(t),w_k)|^2.
\ee
By using the Gagliardo-Nirenberg  inequality \eqref{GN} and the Cauchy - Schwarz inequality and the estimates \eqref{kdv6} and \eqref{Nes0} we get
\begin{multline}\label{zes2}
\begin{split}
|(u\p_xz,z)|=\frac12|(z^2,\p_x u)|&\leq \frac12\|\p_xu\|\|z\|^2_{L^4} \le  \frac12\beta^2\|\p_xu\|\|z\|^{\frac32}\|\p_xz\|^{\frac12}\\&\le\frac12\|\p_xz\|^2 +\left(\frac12\beta^2\|\p_xu\|\right)^{4/3}\|z\|^2, \ \forall t\ge t_3.
\end{split}
\end{multline}
Similarly we obtain
\be\label{zes3}
|(z^2,\p_xv)|\le  \|v\|_{L^4}^2 \|\p_xv\|\le \beta^2\|z\|^ {1/2}\|\p_xz\|^{3/2}
\le\frac12\|\p_xz\|^2+\frac14\left(\|\p_xv\|\beta^2\right)^4\|z\|^2,  \forall t\ge t_3.
\ee
Thanks to \eqref{zes2} and \eqref{zes3} we  get from \eqref{zes1} the inequality
$$
\frac d{dt}\|z(t)\|^2+\|\p_xz(t)\|^2\le-2\mu\sum_{k=1}^N(z(t),w_k)^2+M_{3}\|z\|^2, \ \ \forall t\ge t_3,
$$
where $M_{3} :=2\beta^4\la_1^{-\frac34}\left(M_0^2+\frac14 M_2^2\right)$.
Using the inequality \eqref{PFN}  and the fact that $$\|z(t)\|^2=\sum\limits_{k=1}^N(z(t),w_k)^2+\sum\limits_{k=N+1}^\infty(z(t),w_k)^2, \ \ \forall t \ge t_3,$$ we get
$$
\frac d{dt}\|z(t)\|^2+\|\p_xz(t)\|^2\le\left(-2\mu+M_3\right)\sum_{k=1}^N(z(t),w_k)^2+M_3\la_{N+1}^{-1}\|\p_xz(t)\|^2, \ \ \forall t\ge t_3.
$$
Hence, if $N$ and $\mu$ are large enough, such that $M_3\la_{N+1}^{-1}\le\frac12$ and $M_3\le2\mu$, we have
$$
\frac d{dt}\|z(t)\|^2+\frac {\la_1}2\|z(t)\|^2\le0, \ \ \forall t\ge t_3.
$$
Thus
$$
\|z(t)\|\le z(t_3)e^{-(t-t_3)}, \ \ \forall t\ge t_3.
$$

So we proved the following theorem
\begin{theorem}
Suppose that  $v\in \dot H_{per}^1(0,1)$ is a given  weak solution of  problem \eqref{kdv1}-\eqref{kdv3} and the following conditions hold true:
$$
2\beta^4\la_1^{-\frac34}\left(M_0^2+\frac14 M_2^2\right)\le 2\mu, \ \ \mbox{and} \ \ \la_{N+1}^{-1}\left(2\beta^4\la_1^{-\frac34}\left(M_0^2+\frac14 M_2^2\right)\right)\le \frac12,
$$
where $M_0, M_2$ are defined in \eqref{kdv6} and \eqref{Nes0}. Then
$$
\|u(t)-v(t)\|\rw 0 \ \ \mbox{with an exponential rate, as} \ \ t\rw \infty,
$$
where $u(t)\in \dot H_{per}^1(0,1)$  is an arbitrary solution  of the feedback control system.
\end{theorem}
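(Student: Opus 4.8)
The plan is to repeat, for the KdV--Burgers system, the three-stage scheme already carried out above for the BBMB equation. First one derives uniform-in-time absorbing estimates, now in $\dot H^1_{per}(0,1)$, for both the prescribed solution $v$ of \eqref{kdv1}--\eqref{kdv3} and the controlled solution $u$ of \eqref{kdu1} (whose global solvability in $C(\R^+;\dot H^1_{per}(0,1))$ follows by the standard Galerkin argument, exactly as for \eqref{kdv1}). Then one writes the equation \eqref{kdz1} satisfied by the difference $z=u-v$ and tests it against $z$ in $L^2$. Finally one closes a Gronwall differential inequality for $\|z(t)\|^2$ by separating low and high Fourier modes and invoking the smallness hypotheses on $\mu$ and $N$.

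For the a priori estimates I would first obtain $\|v(t)\|^2\le\rho_1$ for $t\ge t_0$ from the first energy equality, as in \eqref{L2es}. To upgrade to $\dot H^1_{per}$ I would test \eqref{kdv1} against $v^2$ and use the equation itself to substitute $-v\p_xv=\p_tv+\p_x^3v-\p_x^2v-h$ inside the term $(v\p_xv,\p_x^2v)$; this produces a differential inequality for the functional $\|\p_xv\|^2-\frac13(v^3,1)$, in which the remaining cubic quantities $(v^3,1)$ and $\|v\|_{L^4}^4$ are controlled by the Gagliardo--Nirenberg inequality \eqref{GN} with $p=4$ followed by Young's inequality; this yields \eqref{kdv5} and hence $\|\p_xv(t)\|^2\le M_0$ for $t\ge t_1$ as in \eqref{kdv6}. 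The identical computations applied to \eqref{kdu1} give $\|u(t)\|^2\le M_1$ for $t\ge t_2$ when testing against $u$ --- the feedback term only helps here, since $\mu\sum_{k=1}^N(u,w_k)(v,w_k)\le\frac\mu2\sum_{k=1}^N(u,w_k)^2+\frac\mu2\sum_{k=1}^N(v,w_k)^2$ --- and then $\|\p_xu(t)\|^2\le M_2$ for $t\ge t_3$ when testing against $-2\p_x^2u$, treating $((\p_xu)^3,1)$ by \eqref{gn3} and the feedback contribution by \eqref{gn2}, i.e.\ the estimate \eqref{Nes0}.

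The difference $z=u-v$ solves \eqref{kdz1}, whose nonlinearity is $u\p_xz+z\p_xv$, whose forcing is $-\mu\sum_{k=1}^N(z,w_k)w_k$, and in which the skew-adjoint term $\p_x^3z$ and the common source $h$ contribute nothing after testing against $z$ in $L^2$:
$$
\frac12\frac d{dt}\|z(t)\|^2+(u\p_xz,z)+(z^2,\p_xv)+\|\p_xz(t)\|^2=-\mu\sum_{k=1}^N|(z(t),w_k)|^2 .
$$
I would then bound $|(u\p_xz,z)|=\frac12|(z^2,\p_xu)|$ and $|(z^2,\p_xv)|$ by estimating $\|z\|_{L^4}^2$ through the one-dimensional Gagliardo--Nirenberg inequality \eqref{GN} and then Young's inequality, absorbing every resulting $\|\p_xz\|^2$ term into the dissipation and inserting the uniform bounds $\|\p_xu\|^2\le M_2$ and $\|\p_xv\|^2\le M_0$. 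This produces, for $t\ge t_3$, an inequality of the form
$$
\frac d{dt}\|z(t)\|^2+\|\p_xz(t)\|^2\le-2\mu\sum_{k=1}^N(z(t),w_k)^2+M_3\|z(t)\|^2 ,
$$
with $M_3$ an explicit constant built from $M_0$, $M_2$, $\la_1$ and $\beta$, namely the quantity $2\beta^4\la_1^{-3/4}\bigl(M_0^2+\frac14M_2^2\bigr)$ that appears in the hypotheses. Writing $\|z(t)\|^2=\sum_{k=1}^N(z(t),w_k)^2+\sum_{k=N+1}^\infty(z(t),w_k)^2$ and applying \eqref{PFN} to the tail sum, the assumptions $M_3\le2\mu$ and $M_3\la_{N+1}^{-1}\le\frac12$ force the right-hand side to be $\le-\frac12\|\p_xz(t)\|^2\le-\frac{\la_1}2\|z(t)\|^2$ by the Poincar\'e--Friedrichs inequality \eqref{PF}; Gronwall's lemma then yields $\|z(t)\|\le\|z(t_3)\|e^{-(t-t_3)}$ for all $t\ge t_3$, which is the asserted exponential decay of $\|u(t)-v(t)\|$.

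The main obstacle is the first stage, not the difference-equation step. Because the KdV--Burgers nonlinearity is only $v\p_xv$, the $H^1$-level dissipativity is not automatic: one must eliminate $v\p_xv$ by means of the equation and then tame $((\p_xv)^3,1)$ and $\|v\|_{L^4}^4$ by carefully balanced Gagliardo--Nirenberg--Young estimates, and it is essential to run the argument for the corrected functional $\|\p_xv\|^2-\frac13(v^3,1)$ (and, analogously, for $\|\p_xu\|^2$ with its own cubic correction) rather than for $\|\p_xv\|^2$ alone, so that the absorbing estimates \eqref{kdv5}--\eqref{kdv6} and \eqref{Nes0} genuinely close. Once $M_0$ and $M_2$ are in hand, the remainder is the by-now-standard low-mode feedback Gronwall argument.
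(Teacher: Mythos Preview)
Your proposal is correct and follows essentially the same three-stage scheme as the paper: absorbing $L^2$ and $H^1$ estimates for $v$ and $u$, the $L^2$ energy identity \eqref{zes1} for $z$, the Gagliardo--Nirenberg/Young bounds \eqref{zes2}--\eqref{zes3}, and the low/high-mode splitting via \eqref{PFN}. One small clarification of an internal inconsistency in your write-up: as you yourself correctly state when citing \eqref{Nes1}--\eqref{Nes0}, the $H^1$ bound for $u$ is obtained by testing against $-2\p_x^2u$ and treating $((\p_xu)^3,1)$ as a source term via \eqref{gn3}, \emph{not} via a cubic-corrected functional as your closing paragraph suggests --- the corrected functional $\|\p_xv\|^2-\tfrac13(v^3,1)$ is used only for $v$.
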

\subsection{Strongly damped nonlinear wave equation}

In this section we consider  the initial boundary value problem for $3D$ strongly damped nonlinear wave equation:

\noindent \be\label{foz1}
\begin{cases}
 \pt^2v-\Dx v-b\Dx \pt v-\la v+f(v)=0, \ x \in \Om, t>0, \\
 v=0, \
x \in \p \Om, \ t>0, \\
v(x,0)=v_0(x), \ \pt v(x,0)=v_1(x), \ \
x \in \Om, \ t>0,
\end{cases}
\ee
where $b>0$ is a given number, $\Om\subset \R^3$ is a bounded domain with a smooth boundary $\p \Om,f: C^1(\R\rw \R)$ is a given function which satisfies the condition
\begin{equation}\label{1.int}
-m_0+ a|s|^{p} \leq f'(s)\leq m_0(1+|s|^{p} ),\ \ \ \ \
\forall s \in\R^1
\end{equation}
with some $ m_0>0,a>0, p\geq 2$. The existence of a unique weak (energy) solution of the problem \eqref{foz1}, i.e. a  function
$$
v\in  L^\infty([0,T],H^1_0(\Om)\cap L^{p+1}(\Om))\cap
W^{1,\infty}([0,T],L^2(\Om))\cap W^{1,2}([0,T],H^1(\Om)), \ \forall t>0,
$$
which satisfies the equation in the sense of distributions,  is  established in \cite{KaZe}.
Here we consider the feedback control problem for stabilizing the solution $v$ of the  $3D$ strongly damped nonlinear wave equation,  based on finitely many Fourier modes,
 i.e. , we consider the  feedback system of the following form:
\be\label{fo1}\begin{cases}
 \pt^2u-\Dx u -b\Dx \pt u-\la u+f(u)=-\mu
\sum\limits_{k=1}^N(z +\p_tz,w_k)w_k, \ x \in \Om, t>0, \\
u=0, \
x \in \p \Om, \ t>0, \\
 u(x,0)=u_0(x), \ \pt u(x,0)=u_1(x), \ \
x \in \Om, \ t>0,
\end{cases}\ee
where  $z=u-v, \ \ \mu>0$ is a
given numbers,   $w_1,w_2,..., w_n,...$ is the set of orthonormal (in
$L^2(\Om)$) eigenfunctions  of the Laplace operator $-\Dx$ under the
homogeneous Dirichlet's boundary condition, corresponding to
eigenvalues
$$0<\la_1\leq \la_2 \cdots \leq \la_n\leq \cdots .$$
In what follows we will use the following lemma:
\begin{lemma}\label{Da} (see for instance \cite{Z1}) If the function $f$ satisfies condition
\eqref{1.int}  then
\be\label{Dams}
[f(u_1)-f(u_2)].(u_1-u_2)\ge -\la|v|^2+d_0(|u_1|^{p}+|u_2|^{p})|v|^2,
\ee
for some positive $\la,d_0$ and $p\geq 2$.
\end{lemma}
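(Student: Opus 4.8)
The plan is to derive the pointwise estimate \eqref{Dams} from the structural assumption \eqref{1.int} by writing the difference quotient of $f$ as an average of $f'$ and then invoking a one--variable convexity fact. Set $v := u_1 - u_2$; the case $u_1 = u_2$ is trivial, so assume $u_1 \neq u_2$. Since $f \in C^1(\R)$, the fundamental theorem of calculus gives
\[
\big[f(u_1) - f(u_2)\big](u_1 - u_2) = |v|^2 \int_0^1 f'\big((1-s) u_2 + s u_1\big)\, ds ,
\]
and the lower bound in \eqref{1.int}, namely $f'(\sigma) \ge -m_0 + a |\sigma|^p$, yields
\[
\big[f(u_1) - f(u_2)\big](u_1 - u_2) \ge -m_0 |v|^2 + a |v|^2 \int_0^1 \big|(1-s) u_2 + s u_1\big|^p\, ds .
\]
Hence the whole statement reduces to the elementary claim that there is a constant $c_p > 0$, depending only on $p$, with $\int_0^1 |(1-s) u_2 + s u_1|^p\, ds \ge c_p\big(|u_1|^p + |u_2|^p\big)$ for all $u_1, u_2 \in \R$.

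To prove that claim I would change variables $\sigma = (1-s) u_2 + s u_1$, turning the integral into the average $\frac{1}{|u_1 - u_2|}\int_{\min(u_1,u_2)}^{\max(u_1,u_2)} |\sigma|^p\, d\sigma$ of $|\sigma|^p$ over the segment joining $u_1$ and $u_2$. Since the desired inequality is homogeneous of degree $p$ in $(u_1,u_2)$, one may normalise $\max(|u_1|,|u_2|) = 1$. When $u_1$ and $u_2$ have the same sign, $|\sigma|^p$ is convex on the segment joining them, so the Hermite--Hadamard inequality bounds the average below by $\big|\tfrac{u_1+u_2}{2}\big|^p \ge 2^{-p}$; when they have opposite signs, an explicit integration gives the average equal to $\frac{|u_1|^{p+1}+|u_2|^{p+1}}{(p+1)(|u_1|+|u_2|)} \ge \frac{1}{2(p+1)}$. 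In either case the average is at least $c_p := \min\{2^{-p},(2(p+1))^{-1}\} > 0$; undoing the normalisation gives $\int_0^1 |(1-s)u_2 + s u_1|^p\, ds \ge c_p \max(|u_1|,|u_2|)^p \ge \tfrac{c_p}{2}\big(|u_1|^p+|u_2|^p\big)$. Substituting this back produces \eqref{Dams} with, for instance, $\lambda = m_0$ (or any larger value) and $d_0 = \tfrac12 a c_p$, and $p \ge 2$ as in \eqref{1.int}.

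I expect the only genuine subtlety to be the uniform lower bound for the average of $|\sigma|^p$ over the segment from $u_2$ to $u_1$: one must check that this average does not collapse to $0$ when $u_1$ and $u_2$ straddle the origin, so that the segment passes through $\sigma = 0$. This is precisely the opposite--sign case above, where the explicit formula shows that the two endpoints alone already force the average to stay bounded below. Everything else is a routine application of the fundamental theorem of calculus together with the assumption \eqref{1.int}; note that only the lower half of the two--sided bound \eqref{1.int} is used.
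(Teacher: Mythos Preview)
Your argument is correct. The integral representation via the fundamental theorem of calculus, followed by the lower bound from \eqref{1.int} and the elementary estimate $\int_0^1|(1-s)u_2+su_1|^p\,ds\ge c_p(|u_1|^p+|u_2|^p)$, does exactly what is needed; your case split (same sign via Hermite--Hadamard, opposite sign via the explicit antiderivative) handles the only genuine obstruction, namely the possibility that the segment passes through the origin.

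As for comparison with the paper: there is nothing to compare. The paper does not prove this lemma at all; it simply states it and refers the reader to \cite{Z1}. Your write-up therefore supplies a self-contained proof where the paper gives only a citation. One minor remark: since $|\sigma|^p$ is convex on all of $\R$ for $p\ge1$, Hermite--Hadamard is in fact valid across the origin too, but it becomes vacuous there (the midpoint may be $0$), which is why your separate treatment of the opposite-sign case is necessary and not merely a convenience.
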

Our main result in this section is the following theorem:

\begin{theorem}\label{TF} Suppose that $\mu$ and $N$  are large enough such that
\be\label{cF1} \nu \geq (2a+3b^2/4)\la_{N+1}^{-1}, \ \ \mbox{and} \ \  \mu \geq a+3b^2/4. \ee
 Then the following decay estimate holds true
  \be\label{estW}
\|\pt z(t)\|^2+\|\Nx z(t)\|^2\leq E_0e^{-\frac b2
t}, \ee
where
\begin{multline*}
\begin{split}
E_0:&=\frac12\|u_1\|^2+\frac \nu 2\|\Nx u_0\|^2\\&
+(\frac{b^2}4-\frac
a2)\|u_0\|^2+ \frac1p\int_\Om |u_0(x)|^pdx
+\frac\mu2\sum_{k=1}^N(u_0,w_k)^2+\frac { b}2(u_0,u_1).
\end{split}
\end{multline*}
\end{theorem}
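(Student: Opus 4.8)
The plan is a Lyapunov (perturbed-energy) argument for the error $z:=u-v$. Subtracting \eqref{foz1} from \eqref{fo1}, the function $z$ solves
\[
\p_t^2 z-\nu\Dx z-b\Dx\p_t z-\la z+\bigl(f(u)-f(v)\bigr)=-\mu\sum_{k=1}^N(z+\p_t z,w_k)w_k,\qquad z\big|_{\p\Om}=0 ,
\]
with $z(0)=u_0-v_0$, $\p_t z(0)=u_1-v_1$ (here $\nu>0$ is the coefficient of the elliptic term in \eqref{foz1}). Writing $P_N w:=\sum_{k=1}^N(w,w_k)w_k$, I would pair this equation in $L^2(\Om)$ with $\p_t z+\tfrac b2 z$ — the classical ``$\p_t z+\varepsilon z$'' multiplier that turns strong damping into exponential decay — obtaining an identity $\tfrac d{dt}E(t)+D(t)=0$, where
\[
E(t)=\tfrac12\|\p_t z\|^2+\tfrac\nu2\|\Nx z\|^2+\int_\Om\Cal F(z)\,dx+\tfrac\mu2\sum_{k=1}^N|(z,w_k)|^2+\tfrac b2(z,\p_t z),\qquad \Cal F'=f ,
\]
so that $E(0)=E_0$ (the $\|z\|^2$ and $\int|z|^p$ contributions in the statement's $E_0$ being the bounds for $\int_\Om\Cal F$ given by \eqref{1.int}), and $D(t)$ collects the dissipative terms $b\|\Nx\p_t z\|^2$ (strong damping against $\p_t z$), $\tfrac{b\nu}2\|\Nx z\|^2$ (elliptic term against $\tfrac b2 z$) and $\mu\|P_N\p_t z\|^2+\tfrac{\mu b}2\|P_N z\|^2$ (feedback), together with the indefinite and lower-order terms $-\tfrac b2\|\p_t z\|^2$ (from pairing $\p_t^2 z$ with $\tfrac b2 z$), multiples of $\|z\|^2$ coming from $-\la z$ and $f(u)-f(v)$, and the indefinite feedback cross-term $\tfrac{\mu b}2(P_N z,P_N\p_t z)$.

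The nonlinearity is controlled by Lemma \ref{Da}: pairing $-\la z+(f(u)-f(v))$ with $z$ gives
\[
\bigl(-\la z+f(u)-f(v),\,z\bigr)\ge -C\|z\|^2+d_0\int_\Om\bigl(|u|^p+|v|^p\bigr)z^2\,dx\ge -C\|z\|^2 ,
\]
so the monotone part has a favourable sign and, crucially, \emph{no a priori bounds on $u$ or $v$ are needed} — in contrast with the BBMB and KdVB cases above. The term $(f(u)-f(v),\p_t z)$ is the exact derivative $\tfrac d{dt}\int_\Om\Cal F(z)$ when one stabilises the zero equilibrium $v\equiv0$ (admissible since $h\equiv0$ in \eqref{foz1}); for a general solution $v$ it equals this derivative up to terms containing $\Nx v$ and $\p_t v$, which stay bounded because the energy of \eqref{foz1} is dissipative. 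One then verifies the coercivity $E(t)\ge c_0\bigl(\|\p_t z\|^2+\|\Nx z\|^2\bigr)$: the cross-term $\tfrac b2(z,\p_t z)$ is absorbed by $\tfrac12\|\p_t z\|^2$ plus a multiple of $\|z\|^2$ via Young's inequality \eqref{Young}, and the residual negative $\|z\|^2$-contribution is dominated by $\tfrac\nu2\|\Nx z\|^2$ (via \eqref{PF}) and $\tfrac\mu2\sum_{k\le N}|(z,w_k)|^2$ under \eqref{cF1}.

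The decisive step is the estimate $D(t)\ge\tfrac b2 E(t)$. I would split every bad term into its low-frequency part $P_N(\cdot)$ and its high-frequency part $(I-P_N)(\cdot)$: the low-frequency part is absorbed by the feedback dissipation $\mu\|P_N\p_t z\|^2+\tfrac{\mu b}2\|P_N z\|^2$ (the indefinite cross-term handled by \eqref{Young}), which forces $\mu\ge a+\tfrac34 b^2$; the high-frequency part is absorbed by $b\|\Nx\p_t z\|^2$ and $\tfrac{b\nu}2\|\Nx z\|^2$ through \eqref{PFN} in the form $\|(I-P_N)w\|^2\le\la_{N+1}^{-1}\|\Nx w\|^2$, which forces $\nu\ge(2a+\tfrac34 b^2)\la_{N+1}^{-1}$ — precisely conditions \eqref{cF1}. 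Taking the multiplier weight to be exactly $b/2$ is what makes the residual estimate produce the full factor $\tfrac b2 E(t)$ and hence the rate $b/2$. Then $\tfrac d{dt}E(t)+\tfrac b2 E(t)\le 0$, so $E(t)\le E(0)e^{-bt/2}=E_0 e^{-bt/2}$, and the coercivity of $E$ yields \eqref{estW}.

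I expect the main obstacle to be exactly the bookkeeping of the previous paragraph: arranging the Young-inequality splittings so that the three distinct dissipation mechanisms — the $\Nx\p_t z$ term, the $\Nx z$ term and the $N$ feedback modes — simultaneously absorb all three families of bad terms \emph{and} leave behind a clean multiple $\tfrac b2 E(t)$, with the thresholds coming out exactly as $a+\tfrac34 b^2$ and $(2a+\tfrac34 b^2)\la_{N+1}^{-1}$; extracting the precise exponent $b/2$ rather than merely some positive rate is the delicate point. A secondary subtlety, invisible when $v\equiv0$, is turning $(f(u)-f(v),\p_t z)$ into an exact time derivative, which needs the boundedness of $v$ in the energy norm.
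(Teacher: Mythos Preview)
Your approach differs substantially from the paper's. You test the $z$-equation with $\p_t z+\tfrac b2 z$; the paper instead tests with $P\p_t z$ and with $\eb z$, where $P=(-\Dx)^{-1}$, and adds the two resulting identities. The reason for the paper's choice is precisely the term you label a ``secondary subtlety'': for a general reference trajectory $v$, the pairing $(f(u)-f(v),\p_t z)$ is \emph{not} an exact time derivative up to a harmless remainder. If one writes
\[
(f(u)-f(v),\p_t z)=\frac d{dt}\int_\Om\bigl[F(u)-F(v)-f(v)z\bigr]\,dx-\int_\Om\bigl[f(u)-f(v)-f'(v)z\bigr]\,\p_t v\,dx,
\]
the remainder behaves like $\int_\Om(|u|^{p-1}+|v|^{p-1})\,z^2\,|\p_t v|\,dx$. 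Since $p\ge2$ is unrestricted in \eqref{1.int} (supercritical growth is allowed), no combination of the available bounds $u,v\in L^\infty_t(H^1_0\cap L^{p+2})$ and $\p_t v\in L^\infty_t L^2$ controls this, and the same obstruction blocks any direct Cauchy--Schwarz estimate of $(f(u)-f(v),\p_t z)$, because $\|f(u)-f(v)\|$ is not dominated by the energy when $p$ is large. Your plan therefore has a genuine gap for general $v$, at exactly the step you flagged as minor.

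The paper circumvents this by replacing the multiplier $\p_t z$ with $P\p_t z$. Then $(-b\Dx\p_t z,P\p_t z)=b\|\p_t z\|^2$ still supplies dissipation, while $(f(u)-f(v),P\p_t z)$ is estimated \emph{directly}: using $|f(u)-f(v)|\le C(1+|u|^p+|v|^p)|z|$, the interpolation $\|P\p_t z\|^2\le c_0\|\p_t z\|\|P^{1/2}\p_t z\|$, and the uniform bound on $\int_\Om(|u|^p+|v|^p)\,dx$, the term is absorbed by $\eb_1\|\p_t z\|^2+\eb_1C((|u|^p+|v|^p),z^2)+C(\eb_1)\|P^{1/2}\p_t z\|^2$, the middle piece being cancelled by the favourable sign in Lemma~\ref{Da} coming from the multiplier $\eb z$. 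The price is that the Lyapunov functional $E_\eb$ controls only $\|P^{1/2}\p_t z\|^2+\|\Nx z\|^2$, and the rate obtained is ``some $\delta>0$'' rather than the explicit $b/2$; the paper's own argument in fact does not reproduce the precise thresholds in \eqref{cF1} or the formula for $E_0$. If the intended target were only the zero state $v\equiv0$ --- which the form of $E_0$, written in $u_0,u_1$ rather than $z_0,z_1$, actually suggests --- then your multiplier would work and would yield the stated constants more transparently than the paper's proof does.
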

\begin{proof}
It is clear that the function $z=u-v$ satisfies
\be\label{fov1}
\begin{cases}
\pt^2z-\Dx z -b\Dx \pt z+f(u)-f(v)=-\mu
\sum\limits_{k=1}^N(z+\p_t z,w_k)w_k, \ x \in \Om, t>0, \\
 z=0, \ \mbox{for} \
x \in \p \Om, \ t>0, \\
 z(x,0)=z_0(x), \ \pt z(x,0)=z_1(x), \ \
x \in \Om, \ t>0,
\end{cases}
\ee
where $v_0=u_0-v_0, z_1=u_1-v_1$.

Let $P=(-\Dx)^{-1}$ be the inverse of the Laplace operator under the homogeneous Dirichlet boundary condition. First we multiply the equation \eqref{fov1} by
$P\p_tv$ and integrate over $x\in\Omega$
\begin{multline}\label{fcn1}
\begin{split}
\frac d{dt}\left[\|P^{\frac12}\p_t z\|^2+ \| z\|^2+\mu\sum\limits_{k=1}^N\la_k^{-1}(z,w_k)^2\right]&+2b\|\p_tz\|^2
+2(f(u)-f(v),P\p_tz)\\&+2\mu\sum\limits_{k=1}^N\la_k^{-1}(\p_tz,w_k)^2=0.
\end{split}
\end{multline}
Multiplying \eqref{fov1} by $z$ and integrating over $\Omega$ we get
\begin{multline}\label{fcn2}
\begin{split}
\frac d{dt}\left[\frac{ b}2\|\nabla z\|^2+ (z,\p_tz)+\frac{\mu}2\sum\limits_{k=1}^N(z,w_k)^2\right]&- \|\p_t z\|^2+\|\nabla z\|^2\\&=
-(f(u)-f(v),z)-\mu\sum\limits_{k=1}^N(z,w_k)^2.
\end{split}
\end{multline}
Now we multiply \eqref{fcn2} by a positive parameter $\eb >0$    (to be chosen below) and add  to \eqref{fcn1} to obtain:

\begin{multline}\label{fcn3}
\begin{split}
\frac d{dt}E_\eb(t)+ (2b-\eb )\|\p_t z\|^2&+2(f(u)-f(v),P\p_tz)+2\mu\sum\limits_{k=1}^N\la_k^{-1}(\p_tz,w_k)^2\\&+
\eb(f(u)-f(v),z)+\eb\|\nabla z\|^2  +\eb \mu\sum\limits_{k=1}^N(z,w_k)^2=0,
\end{split}
\end{multline}
where
\begin{multline*}
\begin{split}
E_\eb(t):=\|P^{\frac12}\p_t z(t)\|^2+ \| z(t)\|^2+\mu\sum\limits_{k=1}^N(\la_k^{-1}+\frac{\eb}2)(z(t),w_k)^2&+
\frac{\eb b}2\|\nabla z(t)\|^2\\&+ \eb(z(t),\p_tz(t)).
\end{split}
\end{multline*}
It is easy to see that if $0<\eb\leq \frac b2$ then
\be\label{Eb}
E_\eb(t)\ge \frac12\|P^{\frac12}\p_t z\|^2+ \| z\|^2+\mu\sum\limits_{k=1}^N(\la_k^{-1}+\frac{\eb}2)(z,w_k)^2+
\frac{\eb b}4\|\nabla z\|^2.
\ee
Employing the interpolation inequality
$$
\|P\p_tz\|^2\leq c_0\|\p_tz\|\|P^{\frac12}\p_tz\|
$$
and the condition \eqref{1.int} we can estimate the term $2(f(u)-f(v),P\p_tz)$ as follows
\begin{multline}\label{fest1}
\begin{split}
|2(f(u)-f(v),P\p_tz)|\leq  \eb_1(|u|+|v|)^p,|z|)^2&+\eb_1\|z\|^2 +\eb_1\|\p_tz\|^2 \\& +C(\eb_1)\|P^{\frac12}\p_tz\|^2.
\end{split}
\end{multline}
Since
$$
\left((|u|+|v|)^p,|z|\right)^2\leq C\left(\int_\Om(|u^p|+|v|^p)dx\right)\left((|u|+|v|)^p,|z|^2\right)
$$
and the integral $\int_\Om(|u(x,t)|^p+|v(x,t)|^p)dx$ is uniformly bounded we obtain from \eqref{fest1} that
\begin{multline}\label{fest2}
\begin{split}
|2(f(u)-f(v),P\p_tz)| \leq  \eb_1C(|u|^p+|v|^p),|z|^2)&+\eb_1\|z\|^2 +\eb_1\|\p_tz\|^2 \\& +C(\eb_1)\|P^{\frac12}\p_tz\|^2,
\end{split}
\end{multline}
where $C(\eb_1)=\frac{C}{\eb_1^3}.$

By using the inequalities \eqref{fest2} and \eqref{Dams} we deduce from \eqref{fcn3} the inequality:
\begin{multline}\label{fest2a}
\begin{split}
\frac d{dt}E_\eb(t)&+ (2 b-\eb -\eb_1)\|\p_t z\|^2+(d_0\eb -\eb_1C)(|u|^p+|v|^p),|z|^2)\\&
+\eb\|\nabla z\|^2 -(\eb_1+\eb \la)\| z\|^2 \\&+\mu\sum\limits_{k=1}^N\left[\eb(z,w_k)^2+2\la_k^{-1}(\p_tz,w_k)^2\right]
 -C(\eb_1)\|P^{\frac12}\p_tz\|^2\le0.
\end{split}
\end{multline}
By choosing  $\eb_1=\frac{d_0\eb}C$ and $0<\eb\leq \min\{\frac b2,\frac{bC}{C+d_0}\}$ in \eqref{fest2} we get
\begin{multline*}
\begin{split}
\frac d{dt}E_\eb(t)+ b\|\p_t z\|^2&
+\eb\|\nabla z\|^2 -(\eb_1+\eb \la)\| z\|^2 \\&+\mu\sum\limits_{k=1}^N\left[\eb(z,w_k)^2+ 2\la_k^{-1}(\p_tz,w_k)^2\right]
 -C(\eb_1)\|P^{\frac12}\p_tz\|^2\le0.
\end{split}
\end{multline*}
Let us rewrite the last inequality as follows
\begin{multline}\label{fest3}
\begin{split}
\frac d{dt}E_\eb(t)+ b\|\p_t z\|^2&
+\eb\|\nabla z\|^2 +(\mu \eb-\eb_1 -\eb\la)\sum\limits_{k=1}^N(z,w_k)^2\\&-(\eb_1 +\eb\la)\sum\limits_{k=N+1}^\infty(z,w_k)^2
+(2\mu-C(\eb_1))\sum\limits_{k=1}^N(\p_tz,w_k)^2\la_k^{-1}\\&-C(\eb_1)\sum\limits_{k=N+1}^\infty(\p_tz,w_k)^2\la_k^{-1}\leq 0.
\end{split}
\end{multline}
Finally by choosing $\mu$ and $\la_{N+1}$ large enough we infer from \eqref{fest3}
the following inequality
\be\label{fest3}
\frac d{dt}E_\eb(t)+ \frac b2\|\p_t z\|^2
+\frac\eb2\|\nabla z\|^2 +\frac12\mu \eb\sum\limits_{k=1}^N(z,w_k)^2\leq 0.
\ee
Employing the last inequality and \eqref{Eb} we can show that there exists some $\delta>0$ depending on $\eb$ such that 
$$
\frac d{dt}E_\eb(t)+\delta E_\eb(t)\le0.
$$
The last inequality implies that
$$\|P^{\frac12}\p_t z(t)\|^2+\|\nabla z(t)\|^2$$
tends to zero with an exponential rate, as $t\rw\infty$.

\section{Wave equation with nonlinear damping term}
In this section we consider the initial boundary value problem for a semilinear wave equation with nonlinear damping:
\be\label{ndm1}
\begin{cases}
\p_t^2v+g(\p_tu)-\Dx u +f(u)=0, \ x \in \Om, t>0,
\\
v=0, \
x \in \p \Om, \ t>0, \\
 v(x,0)=v_0(x), \ \pt v(x,0)=u_1(x), \ \
x \in \Om, \ t>0,
\end{cases}
\ee
where $f\in C(\R\rw \R)$ is a given function which satisfies the  condition  \eqref{1.int}, $g\in C(\R\rw\R)$ is a given function which satisfies the conditions
\be\label{gc1}
g(0)=0, \ \ |g(u_1)-g(u_2)|\le a_0(1+|u_1|^{m}+|u_2|^{m})|u_1-u_2|, \ \forall u_1,u_2\in \R
\ee
and
\be\label{gc2}
[g(u_1)-g(u_2)].(u_1-u_2)\ge a_1|u_1-u_2|^2+a_2|u_1-u_2|^{m+2}, \ \forall u_1,u_2\in \R.
\ee
It is well known that, under the above conditions, the stationary problem
\be\label{ndm2}
\begin{cases}
 -\Dx \phi+f(\phi)=0, \ x \in \Om,\\
 \phi=0, x \in \p\Om,
\end{cases}
\ee
corresponding to \eqref{ndm1}, has finitely many solutions.

Next we will show that the system \eqref{ndm1} can be globally stabilized to a given stationary state $\phi$
 also by using a feedback controller involving finitely many Fourier modes. More precisely we will show that
all solutions of the following feedback control problem
\be\label{ndm1}
\begin{cases}
\p_t^2 u+g(\p_t u)-\Dx  u +f( u)=-\mu\sum\limits_{k=1}^N( u-\phi,w_k)w_k, \ x \in \Om, t>0
\\
u=0, \
x \in \p \Om, \ t>0, \\
 u(x,0)=u_0(x), \ \pt u(x,0)=u_1(x), \ \
x \in \Om, \ t>0,
\end{cases}\ee
tend  in the energy norm to the stationary state $\phi$, as $t\rw \infty.$ For global existence and long time behavior of solutions of \eqref{ndm1} see e.g. \cite{Har}, \cite{Lions},\cite{Mar}, \cite{Pr}.\\

It is clear that the function $z=u-\phi$ satisfies
\be\label{ndm2}
\begin{cases}
\p_t^2z+g(\p_tz)-\Dx z+f(u)-f(\phi)=-\mu\sum\limits_{k=1}^N(z,w_k)w_k, \ x \in \Om, t>0,
\\
z=0, \
x \in \p \Om, \ t>0, \\
 z(x,0=z_0(x), \ \pt z(x,0)=z_1(x), \ \
x \in \Om, \ t>0,
\end{cases}\ee
where $z=u-\phi, z_0=u_0-\phi, z_1=u_1$, $m$ is a given number such that $m>0$  if $n=1,2$ and $m\le \frac4{n-2}$ if $n\ge 3.$ \\
Our aim is to show that under some restrictions on $\mu, N$ the function $z$ tends to zero as $t\rw \infty.$

First we multiply \eqref{ndm2} by $\p_tz$ and integrate over $\Om$:
$$
\frac12\frac d{dt}\left[\|\p_tz\|^2+\|\Nx z\|^2+\mu\sum_{k=1}^N(z,w_k)^2\right]+(f(u)-f(\phi),\p_tz)+(g(\p_tz),p_tz)=0.
$$
Since
$$
(f(u)-f(\phi),\p_t z)=\frac d{dt}[(F(u),1)-(f(\phi),z)]
$$
and
$$
(F(u),1)=\int_0^1f(\phi+sz)+(F(\phi),1)
$$
we have
\be\label{ndm3a}
\frac d{dt}\left[\frac12\|\p_tz\|^2+\frac12|\Nx z\|^2+\frac{\mu}2\sum_{k=1}^N(z,w_k)^2+\F(z)\right]+(g(\p_tz),p_tz)=0,
\ee
where
$$
\F(z)=\int_0^1(f(\phi+sz)-f(\phi),z)ds.
$$
Due to the condition \eqref{1.int}
$$
\F(z)\ge-\frac{m_0}2\|z\|^2+\frac{d_0}{p+2}\int_G|z|^{p+2}dx.
$$
Therefore if
\be\label{mucn}
\mu\ge m_0 \ \ \mbox{and} \ \ \la_{N+1}\ge 2\mu m_0
\ee
then
\begin{multline}\label{ndm3}
\begin{split}
E(t):=\frac12\|\p_tz\|^2+\frac12\|\Nx z\|^2&+\frac{\mu}2\sum_{k=1}^N(z,w_k)^2+\F(z)\\& \ge \frac12 \|\p_tz\|^2+\frac14\|\Nx z\|^2+\frac{d_0}{p+2}\int_G|z|^{p+2}dx.
\end{split}
\end{multline}
We will need also the following inequality which we obtain by integration of \eqref{ndm3a} over the interval $(0,t)$ and using the condition \eqref{gc2}
\be\label{ndE}
E(t)+a_1\int_0^t \|\p_\tau z(\tau)\|^2d\tau +a_2\int_0^t\int_G|\p_\tau z(x,\tau)|^{m+2}dxd\tau \le E(0).
\ee
Next we multiply \eqref{1.int} by $z$:
\be\label{ndm4}
\frac d{dt}(z,\p_tz)=\|\p_tz\|^2-\|\Nx v\|^2-(f(z+\phi)-f(\phi),z)-(g(\p_tz),z)-\mu \sum_{k=1}^N(z,w_k)^2.
\ee
Employing the equality
$$
-\frac12\|\Nx z\|^2=\frac32\|\p_tz\|^2+\frac{\mu}2\sum_{k=1}^N(z,w_k)^2+\F(z) -E(t)
$$
we get from \eqref{ndm4}:
\begin{multline}\label{ndm5}
\frac d{dt}(z,\p_tz)=\frac32\|\p_tz\|^2-\frac12\|\Nx z\|^2+[\F(z)-(f(z+\phi)-f(\phi),v)]-(g(\p_tz),z)\\ -\frac\mu2 \sum_{k=1}^N(z,w_k)^2-E(t).
\end{multline}
Thanks to  condition \eqref{1.int} we have
\begin{multline*}
\begin{split}
\F(z)&-(f(z+\phi)-f(\phi),z) =\int_0^1(f(\phi+sz)-f(\phi+z),z)ds\\ & =-\int_0^1(f(\phi+z)-f(\phi+sz),z)ds\le
\frac{m_0}2\|v\|^2-\frac{d_0}{p+2}\int_G|v|^{p+2}dx.
\end{split}
\end{multline*}
Utilizing the last inequality and  conditions \eqref{mucn} we obtain from \eqref{ndm5} that
$$
\frac d{dt}(z,\p_tz)\leq \frac32\|\p_tz\|^2-(g(\p_tz),z)-E(t).
$$
Integrating lthe ast inequality and employing  condition \eqref{gc1} we get
\begin{multline}\label{ndm5}
\begin{split}
\int_0^tE(\tau)d\tau & \le (z(0),\p_t z(0))-(v(t),\p_tz(t))+\frac32\int_0^t\|\p_\tau z(\tau)\|^2 d\tau\\ & +a_0\int_0^t\|\p_\tau z(\tau)\|\| z(\tau)\|d\tau
+a_0\int_0^t\int_G|\p_\tau z(x,\tau)|^{m+1}|z(x,\tau)|dx d\tau.
\end{split}
\end{multline}
Thanks to \eqref{ndm3} and \eqref{ndE} we have
\be\label{ndm6}
\Big|(z(0),\p_t z(0))-(z(t),\p_tz(t))+\frac32 \int_0^t\|\p_\tau z(\tau)\|^2 d\tau\Big|\le C_1.
\ee
On the other hand, employing the Cauchy - Schwarz inequality and \eqref{ndE} we get
\be\label{ndm7}
\int_0^t\|\p_\tau z(\tau)\|\| z(\tau)\|d\tau \le C_2t^{\frac12},
\ee
Thanks to \eqref{ndE}, the H\"{o}lder inequality and the continuous embedding of $H_0^1(G)$ into $L^{m+2}(G)$ we estimate the last term on the right-hand side of \eqref{ndm5}:
\begin{multline}\label{ndm8}
\begin{split}
\int_0^t&|\p_\tau z(x,\tau)|^{m+1}|z(x,\tau)|dx d\tau\\ &\le \left(\int_0^t\int_G|\p_\tau z(x,\tau)|^{m+2}dx d\tau\right)^{\frac{m+1}{m+2}}
\left(\int_0^t\int_G| z(x,\tau)|^{m+2}dx d\tau\right)^{\frac{1}{m+2}}\\ &
\le C \left(\int_0^t\int_G|\p_\tau z(x,\tau)|^{m+2}dx d\tau\right)^{\frac{m+1}{m+2}}\left(\int_0^t\| \Nx z(\tau)\|^{m+2} d\tau\right)^{\frac{1}{m+2}}\le C_3 t^{\frac1{m+2}}.
\end{split}
\end{multline}
Hence due to \eqref{ndm6}-\eqref{ndm8} we obtain from \eqref{ndm5} the inequality
\be\label{ndm9}
\int_0^tE(\tau)d\tau \le C_1 +C_2t^{\frac12}+C_3 t^{\frac1{m+2}}.
\ee
Estimates \eqref{ndm3a} and \eqref{ndm3} imply that  $E(t)$ is nondecreasing function and therefore
$$
tE(t)\le \int_0^tE(\tau)d\tau, \ \ \forall t\ge 0.
$$
From the last inequality and \eqref{ndm9} we finally obtain the desired decay estimate
$$
E(t)\le C t^{-\frac12}, \ \ \mbox{provided} \ \ t>0 \ \ \mbox{is large enough.}
$$
\end{proof}

\section*{Acknowledgments}
{ V.K.Kalantarov\ would like to thank the Weizmann Institute of Science for the generous hospitality during which this work was initiated. E.S.Titi\ would like to thank the ICERM, Brown University, for the warm and kind hospitality where this work was completed.   The work of E.S.Titi\ was supported in part by the ONR grant N00014-15-1-2333.}
\par

\end{document}